\tikzstyle{vertex}=[circle, draw, inner sep=1pt, minimum size=4pt]
\tikzstyle{ann} = [fill=white,font=\footnotesize,inner sep=1pt]
\tikzstyle{arrow} = [thick,<-->,>=stealth]
\newcommand{\noi}{\noindent}
\newtheorem{theorem}{Theorem}[section]
\newtheorem{definition}[theorem]{Definition}
\newtheorem{proposition}[theorem]{Proposition}
\newtheorem{remark}{Remark}
\newcommand{\keywordsname}{Keywords}
\newcommand{\mscname}{MSC 2010}
\title{\sc $\delta^{(k)}$-Colouring of Cycle Related Graphs}
\author{\sc Johan Kok}
\affil{\small Centre for Studies in Discrete Mathematics\\Vidya Academy of Science \& Technology\\Thrissur, Kerala, India.\\ Email: {\tt kokkiek2@wtshwane.gov.za}}
\author{\sc N.K. Sudev}
\affil{\small Department of Mathematics\\ CHRIST (Deemed to be University)\\ Bengaluru-560029, INDIA.\\ Email: {\tt sudevnk@gmail.com}}
\date{}
\begin{document}
\maketitle


\hrule

\begin{abstract}
\noindent With respect to a proper colouring of a graph $G$, we know that $\delta(G) \leq \chi(G) \leq \Delta(G)+1$. If distinct colours represent distinct technology types to be located at vertices the question arises on how to place at least one of each of $k$, $1\leq k < \chi(G)$ technology types together with the minimum adjacency between similar technology types. In an improper colouring an edge $uv$ such that $c(u)=c(v)$ is called a bad edge. In this paper, we introduce the notion of $\delta^{(k)}$-colouring which is a near proper colouring of $G$ with exactly $1\leq k < \chi(G)$ distinct colours which minimizes the number of bad edges.
\vspace{0.1cm}

\keywords{$\delta^{(k)}$-colouring, bad edge, near proper colouring.}
\vspace{0.1cm}

\msc{05C15, 05C38, 05C75, 05C85.} 
\end{abstract}
\vspace{0.2cm}

\hrule

\section{Introduction}

For general notation and concepts in graphs and digraphs see \cite{1,2,4}. Unless stated otherwise, all graphs we consider in this paper are non-empty, finite, simple and connected graphs.

A \textit{vertex colouring} of a graph $G$ is an assignment $c:V(G) \mapsto \mathcal{C}$, where $\mathcal{C}= \{c_1,c_2,c_3,\dots,c_\ell\}$ is a set of distinct colours. A vertex colouring is said to be a \textit{proper vertex colouring} of $G$ if no two distinct adjacent vertices have the same colour. The \textit{chromatic number} of $G$, denoted $\chi(G)$, is the minimum cardinality of a proper vertex colouring of $G$. We call such a colouring a $\chi$-colouring or a \textit{chromatic colouring} of $G$. A chromatic colouring of $G$ is denoted by $\varphi_\chi(G)$. Generally, a graph $G$ of order $n$ is said to be \textit{$k$-colourable} if $\chi(G) \leq k$. Unless mentioned otherwise, a set of colours will mean a set of distinct colours. Generally, the set $c(V(G))$ is a subset of $\mathcal{C}$. A set $\{c_i \in \mathcal{C}: c(v)=c_i$ for at least one $v\in V(G)\}$ is called a \textit{colour class} of the colouring of $G$. If $\mathcal{C}$ is a minimal proper colouring of $G$, we use the term $c(G)$ in this discussion instead of $c(V(G))$. Hence, $c(G) \Rightarrow \mathcal{C}$ and $|c(G)| = |\mathcal{C}|$. The colouring of a subgraph induced by $X\subseteq V(G)$ is denoted by $c(\langle X\rangle)$. Indexing the vertices of a graph $G$ by $v_1,v_2,v_3,\ldots,v_n$ or written as $v_i,\ i = 1,2,3,\ldots,n$, is called minimum parameter indexing. In a similar manner, a \textit{minimum parameter colouring} of a graph $G$ is a proper colouring of $G$ which consists of the colours $c_i;\ 1\le i\le \ell$, where $\ell$ is the chromatic number of $G$. 

\section{$\delta^{(k)}$-Colouring of Graphs}

If the available number of distinct colours is less than the chromatic number of $G$, then any colouring will be an improper colouring  and this will result in the formation of edges with end vertices of the the same colour. An edge $uv$ such that $c(u)=c(v)$ is called a \textit{bad edge}. 
 
If distinct colours represent distinct technology types to be located at vertices, then the question on the possibilities of placing at least one of each of technology types on the vertices of a graph subject to the condition that minimum adjacency between similar technology types occurs. The answer to the question is to maximize properness of the colouring. Properness is measured by certain colouring rules as per which, at most certain number of colour class are permitted to have adjacency among their elements and hence the objective of these rules is to minimize the number of bad edges resulting from an improper colouring. Hence, we can define the following notions:

\begin{definition}{\rm 
A \textit{near proper colouring} is a colouring which minimises the number of bad edges by restricting the number of colour classes that can have adjacency among their own elements.
}\end{definition}

\begin{definition}{\rm 
The \textit{$\delta^{(k)}$-colouring} is a near proper colouring of $G$ consisting of $k$ given colours, which minimizes the number of bad edges by permitting at most one colour class to have adjacency among the vertices in it. 
}\end{definition}

\noi The number of bad edges which result from a $\delta^{(k)}$-colouring of $G$ is denoted by, $b_k(G)$. 

The motivation for the number range of colours is that $\delta^{(k)}$-colouring has a relation with $k$-defect colouring (see \cite{3}). A trivial observation is that for a colouring $\mathcal{C}=\{c_1\}$ or for a colouring $\mathcal{C}'=\{c_i:1\leq i\leq k, k\geq 2\}$ and any colour $c_j\in \mathcal{C}'$, a defect colouring of graph $G$ such that all edges are bad is easily possible through a $\delta^{(1)}$-colouring. Also, if $c_t \in \mathcal{C}',\ c_t\neq c_1,\ c_t\neq c_j$ is introduced to colour a vertex $v\in V(G)$ in the $\delta^{(1)}$-colouring, then $d_G(v)$ edges turn good in the resulting $\delta^{(2)}$-colouring. Clearly, a $\delta^{(k)}$-colouring, $1\leq k < \chi(G)$ exists. It follows easily that for all path graphs $P_n,\ n\geq 2$, a $\delta$-colouring has $k=1$ and all $n-1$ edges are bad. Hence, $b_1(P_n)=n-1$. In fact the same result holds for all $2$-colourable (bipartite) graphs. Henceforth, the graphs $G$ under further study will have, $\chi(G)\geq 3$ and $k\geq 2$.

\begin{proposition}\label{Prop-2.1}
For odd $n\ge 3$, the number of bad edges in a cycle $C_n$ with respect to a $\delta^{(k)}$-colouring is $b_2(C_n)=1$. Furthermore, there are $2n$ such $\delta^{(2)}$-colourings.
\end{proposition}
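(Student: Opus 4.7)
The plan is to prove this in three stages: a lower bound, an explicit construction matching the lower bound, and an enumeration of the optimal colourings.

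For the lower bound $b_2(C_n)\ge 1$, I would use the standard parity argument: if some $2$-colouring of $C_n$ produced zero bad edges, it would be a proper $2$-colouring, forcing $\chi(C_n)\le 2$; this contradicts $n$ being odd, since odd cycles are not bipartite. For the matching upper bound, I would fix any edge $e=v_iv_{i+1}$, remove it to obtain the path on $n$ vertices $v_{i+1},v_{i+2},\ldots,v_i$ (indices mod $n$), and alternately colour this path with $c_1,c_2$. Because $n$ is odd, the two endpoints of the path receive the same colour, so reinserting $e$ yields exactly one bad edge. Combined with the lower bound this gives $b_2(C_n)=1$.

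To count the $\delta^{(2)}$-colourings, I would argue that a colouring attaining $b_2(C_n)=1$ is determined by two pieces of data: which edge is the unique bad edge, and which of the two proper alternating $2$-colourings is used on the resulting path. There are $n$ choices for the bad edge $e_i=v_iv_{i+1}$, and for each such choice the remaining path $C_n-e_i\cong P_n$ admits exactly two proper $2$-colourings (distinguished by the colour assigned to $v_{i+1}$), yielding $2n$ candidate colourings. To see that these $2n$ candidates are pairwise distinct, note that the bad edge of any $\delta^{(2)}$-colouring is recoverable as the unique monochromatic edge, so colourings coming from different choices of $e_i$ cannot coincide; and for a fixed $e_i$ the two proper colourings of the path differ at every vertex, so they are distinct as well. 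Finally, every optimal colouring arises this way, since deleting its unique bad edge from $C_n$ yields a properly $2$-coloured path.

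The main obstacle is really only bookkeeping: making sure that the ``bad edge $+$ starting colour'' parametrisation is both exhaustive and free of double-counting. The exhaustiveness direction is immediate once we observe that an optimal colouring minus its bad edge is a properly $2$-coloured path; the injectivity direction rests on the observation above that the bad edge can be read off from the colouring. Everything else is forced by the alternation required on a $2$-coloured path, so no delicate case analysis is needed.
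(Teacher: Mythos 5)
Your proof is correct, and the core construction (alternate $c_1,c_2$ around the odd cycle so that exactly one clash is forced, plus the non-bipartiteness of $C_n$ for the lower bound) is the same as the paper's. Where you differ is in the enumeration: the paper counts rather loosely by saying that $v_n$ has two colour options and that the colouring of $v_1,\dots,v_{n-1}$ ``can alternate,'' concluding $2n$ without an explicit injectivity or exhaustiveness check, whereas you parametrise an optimal colouring by the pair (location of the unique bad edge, proper $2$-colouring of the resulting path $P_n$), and verify both that the map is onto the set of optimal colourings and that it is injective because the bad edge is recoverable as the unique monochromatic edge. Your version is the more rigorous one and is the cleaner formalisation of what the paper gestures at; in particular it makes transparent why no double-counting occurs, which is the one point the paper's argument leaves to the reader.
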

\begin{proof}
Let $n$ be odd and the vertices of a cycle graph be labeled successively as $v_1,v_2,v_3,\ldots,v_n$. Since $\chi(C_n)=3$, a $\delta^{(2)}$-colouring is permissible. Begin with the specific alternating colouring, $c(v_1)=c_1$, $c(v_2)=c_2$, $c(v_3)=c_1,\ldots, c(v_{n-1})=c_2$. Clearly, $c(v_n) = c_1$ results in edge $v_1v_n$ bad and $c(v_n) = c_2$ results in edge $v_{n-1}v_n$ bad. In both cases, $b_2(C_n)=1$. 

The colouring of the vertex $v_n$ permitted two options. Similarly, the colouring of vertices $v_i,\ 1\leq i \leq n-1$ can alternate between $c_1$ and $c_2$. Therefore, there are $2n$ such $\delta^{(2)}$-colourings.
\end{proof}

In \cite{3}, the $k$-defect polynomial for cycle graphs is given as $\phi_k(C_n;\lambda) = \binom{n}{k}[(\lambda-1)^{n-k} +(-1)^{(n-k)}(\lambda-1)]$. In our context, $k$ means number of colours and $b_k(C_n)$ means number of bad edges. Hence, for $n$ is odd and with the notational interchange to $\lambda=2$ and $k=1$ we have, $\phi_1(C_n;2)=2n$. This corresponds with  Proposition \ref{Prop-2.1}.

A wheel graph is a graph obtained by attaching one edge from each of the vertices of a cycle to an external vertex. That is, $W_{n+1} = C_n+K_1,\ n\geq 3$. The cycle $C_n$ in the wheel $W_{n+1}$ is called the rim of the wheel and the edges and vertices of $C_n$ are respectively called rim edges and rim vertices.  The vertex corresponding to $K_1$ is called the central vertex, say $v_0$ and the edges incident with the central vertex are called spokes. The following theorem discusses the number of bad edges in a wheel graph, with respect to $\delta^{(k)}$-colourings.

\begin{proposition}\label{Prop-2.2}
For the wheel graph $W_{1,n},\ n\geq 3$, 
\begin{enumerate}\itemsep0mm
\item[(i)] $b_2(W_{1,n}) = \frac{n}{2}$ if $n$ is even. Furthermore, there are $4$ ways of obtaining a $\delta^{(2)}$-colouring.
\item[(ii)] $b_2(W_{1,n}) = \lceil\frac{n}{2}\rceil$ if $n$ is odd. Furthermore, there are $4n$ ways of obtaining a $\delta^{(2)}$-colouring.
\item[(iii)] $b_3(W_{1,n}) = 1$ if $n$ is odd. Furthermore, there are $3n$ ways of obtaining a $\delta^{(3)}$-colouring.
\end{enumerate}
\end{proposition}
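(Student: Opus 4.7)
The plan is to handle all three parts by fixing the colour of the central vertex $v_0$ and parametrising each colouring by the sizes of the colour classes on the rim $C_n$. For parts (i) and (ii), where only two colours $c_1,c_2$ are available, by colour-swap symmetry I may assume $c(v_0)=c_1$ and let $x$ denote the number of rim vertices that also receive $c_1$. The number of bad spokes is then exactly $x$. The key auxiliary fact I would establish is: a 2-colouring of $C_n$ with class sizes $x$ and $n-x$ produces at least $|n-2x|$ bad rim edges, with equality attained by splitting the smaller class into isolated vertices (arcs of length one). This follows from the identity ``number of bad rim edges $= n - 2r$'', where $r$ is the common number of maximal monochromatic arcs per colour on the cycle, combined with the bound $r\le\min(x,n-x)$.

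Granted this lemma, the total bad count becomes $f(x)=x+|n-2x|$. For part (i) with $n$ even, $f$ attains its minimum $n/2$ uniquely at $x=n/2$, at which point the rim must alternate; this produces $2$ optimal rim colourings, and multiplication by the $2$ choices of colour for $v_0$ gives the claimed $4$ colourings. For part (ii) with $n$ odd, $f(x)=n-x$ on $x\le(n-1)/2$ and $f(x)=3x-n$ on $x\ge(n+1)/2$, so the minimum is $(n+1)/2=\lceil n/2\rceil$ attained at $x=(n-1)/2$. In any such optimal colouring, the $c_1$ rim vertices form $(n-1)/2$ isolated points while the $c_2$ class consists of $(n-3)/2$ arcs of length one together with a single arc of length two, which may be placed at any of the $n$ edges of $C_n$; the count $4n$ is then to be obtained by combining this placement with the colour swap at $v_0$ and an additional symmetry factor to be pinned down.

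For part (iii) I would set $c(v_0)=c_3$ and apply Proposition \ref{Prop-2.1} to the resulting 2-colouring of the rim using $\{c_1,c_2\}$: every spoke is automatically good, the rim contributes exactly one bad edge, so $b_3(W_{1,n})\le 1$; conversely, because $\chi(W_{1,n})=4$ for odd $n$, no $3$-colouring is proper, yielding the matching lower bound $b_3(W_{1,n})\ge 1$. For the enumeration I would combine the $3$ choices of colour placed at $v_0$ with the $\delta^{(2)}$-colourings of $C_n$ supplied by Proposition \ref{Prop-2.1}, then quotient out the double-counting that collapses the product to~$3n$.

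The main obstacle will not be the bad-edge lower bounds, which follow cleanly from the extremal argument above, but the precise enumeration of $\delta^{(k)}$-colourings in parts (ii) and (iii). In particular, in part (iii) I must verify that no optimal colourings are overlooked (for instance, colourings in which the third colour also appears on the rim at the cost of a compensating bad spoke), and in part (ii) the arithmetic leading to $4n$ must combine the placement of the unique length-two arc, the colour swap at $v_0$, and any residual reflection symmetry consistently with the $2n$ count already established in Proposition \ref{Prop-2.1}.
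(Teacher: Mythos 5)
Your extremal set-up --- fixing $c(v_0)$, letting $x$ be the number of rim vertices sharing that colour, and bounding the bad rim edges below by $|n-2x|$ via the arc count $r\le\min(x,n-x)$ --- is correct and genuinely more rigorous than the paper's argument, which merely exhibits a colouring and asserts its minimality. The values $b_2=\frac{n}{2}$ ($n$ even), $b_2=\lceil\frac{n}{2}\rceil$ ($n$ odd) and $b_3=1$ all fall out of your function $f(x)=x+|n-2x|$ correctly, and your lower bound in (iii) via $\chi(W_{1,n})=4$ is the right argument. (One caveat you share with the paper: the definition of a $\delta^{(k)}$-colouring permits only \emph{one} colour class to have internal adjacency, and the optimal colourings in (ii) violate this, since the class of $v_0$ carries the bad spokes while the other class carries the bad rim edge; both you and the paper silently optimize over all $2$-colourings instead.)

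The genuine gaps are in the enumerations, and they cannot be closed, because your own structural classification contradicts the stated counts. In part (ii) your analysis is airtight: the minimum forces $x=\frac{n-1}{2}$, the $c(v_0)$-class on the rim to consist of singleton arcs, and the unique length-two arc (the bad rim edge) to lie in the other class; this gives exactly $n$ placements times $2$ colour swaps, i.e.\ $2n$ colourings. The ``additional symmetry factor to be pinned down'' does not exist --- check $n=3$: $W_{1,3}=K_4$ has exactly $\binom{4}{2}=6=2n$ balanced $2$-colourings attaining $2$ bad edges, not $4n=12$. In part (iii) the situation is reversed: there is no double-counting to quotient out, and you are missing a whole family. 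A $3$-colouring with a single bad edge has that edge either on the rim (your case: $3$ choices for $c(v_0)$ times the $2n$ rim colourings supplied by Proposition \ref{Prop-2.1}, giving $6n$) or on a spoke (rim properly $3$-coloured with the colour $c(v_0)$ used exactly once on the rim: another $6n$), for $12n$ in total rather than $3n$; for $n=3$ one checks directly that $K_4$ has $36=12n$ three-colourings with one bad edge. So your plan proves the three values of $b_k$, but, carried out honestly, it refutes rather than establishes the enumeration claims in (ii) and (iii); the paper's own proof reaches $4n$ and $3n$ only through an incomplete census of the optimal colourings.
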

\begin{proof}
(i) It is known that $\chi(C_n)=2$ if $n$ is even and there are $2$ ways to obtain a chromatic colouring of the rim $C_n$. For this partial colouring no bad edges result which is an absolute minimum. The central vertex $v_0$ can be coloured either $c_1$ or $c_2$. It implies that exactly $\frac{n}{2}$ edges are bad. Since the rim can be coloured chromatically in $2$ ways and the central vertex can be coloured in 2 ways the wheel graph has 4 possible $\delta^{(2)}$-colourings.

(ii) From the proof of Part 1 of Proposition \ref{Prop-2.1}, it follows that the rim can be coloured to result in a minimum of one bad edge. Clearly, if $c(v_n)=c_1$ the central vertex must be coloured, $c(v_0) =c_2$ and vice versa. In both cases exactly $\lceil \frac{n}{2}\rceil$ bad edges result as a minimum. Therefore, the result follows. Since, $2$-colourings of $v_n$ are permitted to permute with $2$ colourings of the vertices $v_i,\ 1\leq i \leq n-1$ where after the colour of $v_0$ is prescribed and since the rim colourings may rotate through the $n$ rim vertices, a total of $4n$ distinct $\delta^{(2)}$-colourings exist.

(iii) The rim vertices $v_,\ 1\leq i \leq n-1$ can be properly coloured with two colours in $\binom{3}{2}$ ways. The vertices $v_0$, $v_n$ can be coloured with the third colour to result in 1 bad edge as a minimum. Hence, $b_3(W_{1,n}) = 1$. Since $\binom{3}{2}=3$ and the rim colourings may rotate through the $n$ rim vertices, a total of $3n$ distinct $\delta^{(3)}$-colourings exist.
\end{proof}

A \textit{helm graph} $H_{1,n}$ is obtained from the wheel graph $W_{1,n}$ by adding a pendant vertex $u_i$ to each rim-vertex $v_i$, $1\leq i \leq n$. The number of bad edges in a helm graph is determined in the following theorem.

\begin{proposition}\label{Prop-2.3}
For helm graphs, $H_{1,n},\ n\geq 3$, 
\begin{enumerate}\itemsep0mm 
\item[(i)] $b_2(H_{1,n})= \frac{n}{2}$ if $n$ is even. Furthermore, there are 4 ways of obtaining a $\delta^{(2)}$-colouring.
\item[(ii)] $b_2(H_{1,n})= \lceil\frac{n}{2}\rceil$ if $n$ is odd. Furthermore, there are $4n$ ways of obtaining a $\delta^{(2)}$-colouring.
\item[(iii)] $b_3(W_{1,n}) = 1$ if $n$ is odd. Furthermore, there are $3n\cdot2^n$ ways of obtaining a $\delta^{(3)}$-colouring.
\end{enumerate}
\end{proposition}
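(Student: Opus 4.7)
The plan is to exploit the fact that removing the $n$ pendants from $H_{1,n}$ yields the wheel $W_{1,n}$, and that every pendant $u_i$ has a unique neighbour $v_i$. First I argue by restriction that $b_k(H_{1,n})\ge b_k(W_{1,n})$: any $\delta^{(k)}$-colouring of $H_{1,n}$ restricts to a near proper $k$-colouring of $W_{1,n}$ whose bad edges still lie in a single colour class and whose count of bad edges is no larger, so at least $b_k(W_{1,n})$ bad edges must already lie on the wheel part. Equality will be established by extending each optimal wheel colouring from Proposition~\ref{Prop-2.2} to the pendants in every way that introduces no new bad edge, and the three counts are obtained by multiplying the wheel counts by the number of such extensions.

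For parts (i) and (ii), only two colours are available, and in any optimal wheel colouring the single colour class permitted to contain bad edges is fixed. Setting $c(u_i)=c(v_i)$ would either violate the one-class restriction (if $c(v_i)$ is not the permitted class) or strictly increase the bad edge total (if it is); hence each pendant is forced to the unique colour in $\{c_1,c_2\}\setminus\{c(v_i)\}$, giving exactly one extension per wheel colouring. The totals therefore match Proposition~\ref{Prop-2.2}, namely $4$ when $n$ is even and $4n$ when $n$ is odd, with $b_2(H_{1,n})=b_2(W_{1,n})$ in each case.

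For part (iii), start from any of the $3n$ optimal $\delta^{(3)}$-colourings of $W_{1,n}$. Each pendant $u_i$ can now be given either of the two colours in $\{c_1,c_2,c_3\}\setminus\{c(v_i)\}$ without creating a new bad edge, while $c(u_i)=c(v_i)$ is excluded by the same case analysis as before. The $n$ pendant choices are mutually independent, producing $2^n$ extensions per wheel colouring and a total of $3n\cdot 2^n$ distinct $\delta^{(3)}$-colourings of $H_{1,n}$, with $b_3(H_{1,n})=1$.

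The delicate step is the lower bound $b_k(H_{1,n})\ge b_k(W_{1,n})$ coupled with the verification that no alternative strategy, such as placing a bad edge on a pendant edge in the hope of liberating the rim, can reduce the total below this value. Since pendants are leaves, any bad edge incident with them is strictly additional and cannot compensate for the unavoidable bad edges forced by the underlying wheel structure; this additive separation between the wheel's bad-edge budget and the pendants' contribution is what makes the counting clean.
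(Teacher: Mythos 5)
Your proposal is correct and follows essentially the same route as the paper: reduce to the wheel subgraph (Proposition~\ref{Prop-2.2}) and extend its optimal colourings to the pendant vertices, counting one forced extension per pendant when $k=2$ and two free choices per pendant when $k=3$, giving the factors $1$ and $2^n$ respectively. You make explicit the lower bound $b_k(H_{1,n})\ge b_k(W_{1,n})$ via restriction, which the paper leaves implicit, but the argument is otherwise the same.
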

\begin{proof}
The proof of $(i), (ii)$ and the first part of $(iii)$ follow by similar reasoning used in the proof of Proposition \ref{Prop-2.2}. The second part of $(iii)$ can be proved as follows. For the wheel subgraph of $H_{1,n}$, the result of Proposition \ref{Prop-2.2}(iii) holds. Since each pendant vertex can be coloured properly in two ways without resulting in additional bad edges, the pendant vertices can independently be coloured in $2^n$ ways for each colouring of the wheel subgraph. Therefore, the result follows.
\end{proof}

\begin{theorem}\label{Thm-2.4}
For $1\leq k < \chi(G)$, a graph $G$ will have a subgraph $H$ with maximum vertices such that $\chi(H)= k$.
\end{theorem}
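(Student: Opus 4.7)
The plan is to separate the statement into two claims: first, that the family $\mathcal{F}_k := \{H \subseteq G : \chi(H) = k\}$ is non-empty, and second, that from this family one can select a member maximising the vertex count. The second claim is mostly bookkeeping: because $G$ is finite, the set of all subgraphs of $G$ is finite, so any non-empty subcollection $\mathcal{F}_k$ attains a maximum of the integer-valued map $H \mapsto |V(H)|$. Hence the real content lies in verifying that $\mathcal{F}_k \neq \emptyset$ whenever $1 \leq k < \chi(G)$.

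For the non-emptiness, I would use a vertex-deletion argument. Set $G_0 = G$, and for $i \geq 1$ let $G_i$ be obtained from $G_{i-1}$ by removing any one vertex, so that $G_{|V(G)|}$ is the empty graph. The key ingredient is the standard one-vertex-removal bound
\[
\chi(G_{i-1}) - 1 \;\leq\; \chi(G_i) \;\leq\; \chi(G_{i-1}),
\]
where the upper bound is by restricting a proper colouring and the lower bound follows by extending any proper colouring of $G_i$ to $G_{i-1}$ using a fresh colour on the removed vertex. Thus the integer sequence $\chi(G_0), \chi(G_1), \ldots, \chi(G_{|V(G)|})$ starts strictly above $k$, terminates at $0$, and decreases by at most $1$ at each step; a discrete intermediate-value argument then forces $\chi(G_j) = k$ for some index $j$, witnessing $G_j \in \mathcal{F}_k$.

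With $\mathcal{F}_k$ non-empty, the maximum of $|V(H)|$ over $H \in \mathcal{F}_k$ is realised by some $H^\ast \subseteq G$, which is the desired subgraph. The main subtlety I would have to handle is clarifying the meaning of ``subgraph'': the argument above tacitly treats $H$ as an induced subgraph, for otherwise one could always take $H$ to be a $k$-chromatic spanning subgraph of $G$ (built by turning on edges one at a time, again using a unit-step bound on $\chi$), making the maximum trivially equal to $|V(G)|$. Once this convention is fixed, the only mildly non-trivial step is the invocation of the classical one-vertex-removal bound on the chromatic number; the rest is a finite search.
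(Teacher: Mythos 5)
Your proof is correct, but it takes a genuinely different route from the paper's. You cleanly separate the statement into an existence claim (some subgraph $H$ with $\chi(H)=k$ exists) and a trivial maximisation claim (finiteness of $G$ lets you pick one of maximum order), and you settle existence by the classical one-vertex-removal bound $\chi(G)-1\le\chi(G-v)\le\chi(G)$ together with a discrete intermediate-value argument; your remark that the result is only non-trivial for induced subgraphs is also well taken. The paper instead argues constructively from its own machinery: it takes a $\delta^{(k)}$-colouring of $G$, lets $S$ be a minimum vertex cover of the set of bad edges, and claims that $H=\langle V(G)\setminus S\rangle$ is the desired subgraph. That approach buys a concrete link between the theorem and the quantity $b_k(G)$ (the subgraph is obtained by deleting a vertex cover of the bad edges), but it is less tight than yours on two points: deleting $S$ only shows $\chi(H)\le k$ (the chromatic number could in principle drop below $k$, which your intermediate-value step rules out), and the assertion that minimising $|S|$ for one particular $\delta^{(k)}$-colouring yields a subgraph of globally maximum order among all $k$-chromatic induced subgraphs is stated without justification. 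Your argument is more elementary, self-contained, and rigorous; what it gives up is the explicit connection to the $\delta^{(k)}$-colouring and its bad edges that the paper evidently wants to highlight.
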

\begin{proof}
Consider a $\delta^{(k)}$-colouring of a graph $G$ which always exists. Let $E' =\{e_i: e_i \mbox{is a bad edge}\}$. Clearly, $|E'|=b_k(G)$. Now, select a minimum set of vertices, say $S$, such that each bad edge is incident with a vertex in $S$. Such a set $S$ always exists. Then, $\chi(H) =\chi(\langle V(G)-S\rangle)=k$. Also, because $|S|$ is a minimum, it follows that $H\subseteq G$ and $|V(H)|$ is a maximum.
\end{proof}

In context of cycle related graphs, a complete graph $K_n,\ n \geq 4$ is a graph obtained from $C_n$ with all possible distinct chords added. For graphs with relative large chromatic numbers, it is important to recall the $\delta^{(k)}$-colouring rule (that is, the condition that at most one colour class is permitted adjacency). Hence, we have

\begin{theorem}\label{Thm-2.5}
For a complete graph $K_n$, the value $b_k(K_n) = \frac{x(x+1)}{2}$, $x = 1,2,3,\dots n-2$, $k= n-x$. Furthermore, there are $(n-x)\binom{n}{x+1}\cdot[n-(x+1)]!$ distinct colourings corresponding to $b_k(K_n)$. 
\end{theorem}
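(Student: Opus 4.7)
The plan is to exploit the fact that every pair of distinct vertices in $K_n$ is adjacent, which forces a rigid structure on any $\delta^{(k)}$-colouring. Under the $\delta^{(k)}$ rule, at most one colour class may have adjacency among its own vertices. In $K_n$, however, \emph{any} colour class containing two or more vertices immediately has internal adjacency. Consequently, in any $\delta^{(k)}$-colouring of $K_n$ exactly one colour class may have size $\geq 2$, and every other class must be a singleton. With $k = n-x$ colours in use, the $k-1 = n-x-1$ singleton classes absorb $n-x-1$ vertices, so the one non-singleton class must contain the remaining $n-(n-x-1) = x+1$ vertices.

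First I would establish the value $b_k(K_n) = \frac{x(x+1)}{2}$. The subgraph induced by the unique non-singleton class is $K_{x+1}$, contributing exactly $\binom{x+1}{2} = \frac{x(x+1)}{2}$ bad edges, while the singletons contribute none. To see this is in fact a minimum, I would argue that using strictly fewer than $k$ colours only enlarges the unique non-singleton class: if only $k' < k$ of the colours actually appear, the non-singleton class has size $n-k'+1 > x+1$, giving strictly more than $\binom{x+1}{2}$ bad edges. Hence every optimal $\delta^{(k)}$-colouring realises the class-size profile $(x+1,\, 1,\, 1, \ldots, 1)$, and $b_k(K_n) = \binom{x+1}{2}$ follows. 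This also lets me invoke Theorem \ref{Thm-2.4}: deleting the $x+1$ vertices of the large class leaves $K_{n-x-1}$, which is $(n-x)$-colourable only in the trivial sense, i.e.\ the subgraph $H$ promised by that theorem is $K_{n-x-1}$, a consistency check worth mentioning.

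For the enumeration, I would decompose the construction of an optimal colouring into three independent choices. First, choose which of the $n-x$ available colours is assigned to the large class, giving $n-x$ options. Second, choose which $x+1$ of the $n$ vertices receive that colour, giving $\binom{n}{x+1}$ options. Third, distribute the remaining $n-(x+1)$ vertices bijectively among the remaining $n-x-1$ singleton colours, giving $[n-(x+1)]!$ options. Multiplying yields $(n-x)\binom{n}{x+1}\cdot[n-(x+1)]!$ as required. The main obstacle I foresee is not the counting but the lower-bound step: one must rule out \emph{every} alternative way of distributing vertices into colour classes, in particular colourings that omit one or more of the $k$ prescribed colours. The convexity-style observation that shrinking a singleton only enlarges the forbidden clique, together with the at-most-one-bad-class rule, is the crux and is what I would phrase most carefully.
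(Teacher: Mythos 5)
Your proposal is correct and follows essentially the same route as the paper: the unique non-singleton colour class (forced by the at-most-one-adjacent-class rule) must have size $x+1$, giving $\binom{x+1}{2}$ bad edges, and the count decomposes as (choice of colour for the large class) $\times$ (choice of its $x+1$ vertices) $\times$ (bijection of the remaining vertices to the remaining colours). If anything, your version is more complete, since you make explicit the structural characterisation and the lower-bound step that the paper dismisses with ``it is easy to derive.''
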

\begin{proof}
Part 1: It is known that $\chi(K_n) = n$. For $k=n-1$, the colour of $v_i,\ 1\leq i \leq n-1$ can be given by $c(v_i)=c_i$ and $c(v_n)=c_{n-1}$. The minimum number of bad edges in this case is $1$ and the only bad edge is $v_{n-1}v_n$. Hence, $b^*_{n-1}(K_n) = b_{n-1}(K_n)=1$. Also, $\frac{1(1+1)}{2} = 1$. Also, any edge $v_iv_j \cong K_2$. Since any subset of vertices $\binom{n}{t}, 2 \leq t \leq n-1$ induces a complete graph, the number of bad edges can only be $1,3,6,10,15,\ldots,\frac{(n-1)(n-2)}{2}$. It is easy to derive that the number of bad edges corresponds to the sequence, $b_k(K_n) = \frac{x(x+1)}{2}$, $x = 1,2,3,\dots n-2$, $k= n-x$.

Part 2: If $2\leq t \leq n-1$, the vertices of a complete graph $K_n$ have the same colour, say $c_i$, then the corresponding clique $K_t$ has $c(K_t)=c_i$. Hence, $\frac{t(t-1)}{2}$ bad edges exist. It follows that for $x = 1,2,3,\ldots n-2$ and $k= n-x$, exactly $\binom{n}{x+1}$ distinct cliques of order $x+1$ exist. Hence, $(n-x)\binom{n}{x+1}\cdot[n-(x+1)]!$ colourings exist each of which, permits exactly, $b_k(K_n) = \frac{x(x+1)}{2}$ bad edges.
\end{proof}

In the context of cycle related graphs, a complete graph $K_n,\ n \geq 4$ is a graph obtained from $C_n$ with all possible distinct chords added. For graphs with relative large chromatic numbers such as complete graphs it is important to recall the $\delta^{(k)}$-colouring rule (that at most one colour class is permitted adjacency).

In \cite{3}, it is claimed that at the time of writing only two explicit expressions of $k$-defect polynomials are known (for trees and cycle graphs). We add an initial result for determining $b_k(K_n)$.

\begin{theorem}\label{Thm-2.6}
For complete graphs $K_n$ and $\mathcal{C}= \{c_i:1\leq i \leq \lambda\}$ be a set of $\lambda$ distinct colours and for $x = 1,2,3,\ldots n-2$ and $k= n-x$, over all colour subsets, $\binom{\lambda}{k}$ we have, $\phi_k(K_n;\lambda) = \binom{n}{n-k+1}\lambda^{(k)}$. 
\end{theorem}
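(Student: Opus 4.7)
The plan is to combine Theorem~\ref{Thm-2.5}, which counts optimal $\delta^{(k)}$-colourings when the palette has exactly $k$ colours, with a preliminary choice of which $k$ of the $\lambda$ available colours actually get used. Everything needed for the enumeration is already encoded in the structural description derived in Theorem~\ref{Thm-2.5}; what remains is to insert the ``colour-subset'' factor and to simplify.

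First I would recall the rigid structure of an optimal $\delta^{(k)}$-colouring of $K_n$ coming from the proof of Theorem~\ref{Thm-2.5}. With $k = n-x$ colours available, the minimum $b_k(K_n) = \tfrac{x(x+1)}{2}$ is attained precisely when one colour class contains an $(x+1)$-clique (contributing $\binom{x+1}{2}$ bad edges), while each of the remaining $k-1$ colours occurs on exactly one vertex. Such a colouring is thus determined by three mutually independent choices: (a) the $(x+1)$-subset $S \subseteq V(K_n)$, giving $\binom{n}{x+1}$ possibilities; (b) the colour assigned to $S$, giving $k$ possibilities; and (c) a bijection from $V(K_n)\setminus S$ to the remaining $k-1$ colours, giving $(k-1)!$ possibilities. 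This recovers the count $k\cdot\binom{n}{x+1}\cdot(k-1)! = k!\,\binom{n}{x+1}$ asserted in Theorem~\ref{Thm-2.5}.

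Next I would sum over the $\binom{\lambda}{k}$ choices of the active $k$-subset of colours from $\mathcal{C}$. Colourings arising from distinct active colour subsets are trivially distinct, so the total number of $\delta^{(k)}$-colourings of $K_n$ drawn from $\mathcal{C}$ attaining the minimum $b_k(K_n)$ is
\[
\phi_k(K_n;\lambda) \;=\; \binom{\lambda}{k}\cdot k!\cdot \binom{n}{x+1}.
\]
Applying the falling-factorial identity $\binom{\lambda}{k}\,k! = \lambda(\lambda-1)\cdots(\lambda-k+1) = \lambda^{(k)}$ and the substitution $x+1 = n-k+1$ yields the stated form $\phi_k(K_n;\lambda) = \binom{n}{n-k+1}\,\lambda^{(k)}$.

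The only potentially subtle point, which I would address briefly to make the argument airtight, is confirming that the triple (clique $S$, colour of $S$, bijection on the complement) is recovered uniquely from the resulting colouring: this is immediate because in an optimal colouring the overloaded colour class is detectable as the unique class of size at least two, so no colouring is counted twice and none is missed. Beyond this bookkeeping, no case analysis is needed, since Theorem~\ref{Thm-2.5} has already done the combinatorial work inside a fixed colour subset.
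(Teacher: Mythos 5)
Your proposal is correct and follows essentially the same route as the paper: the paper's proof is precisely the product $\binom{\lambda}{k}\cdot(n-x)\binom{n}{x+1}\cdot[n-(x+1)]!$ from Theorem~\ref{Thm-2.5} simplified via $\binom{\lambda}{k}k!=\lambda^{(k)}$ and $x+1=n-k+1$. Your additional remarks on the rigidity of the optimal colouring and the absence of double counting only make explicit what the paper leaves implicit.
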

\begin{proof}
The result is a direct consequence of the proof of Theorem \ref{Thm-2.5}. That is, $\phi_k(K_n;\lambda)=\binom{\lambda}{k}\cdot (n-x)\binom{n}{x+1}\cdot[n-(x+1)]! = k!\cdot\binom{n}{n-k+1}\binom{\lambda}{k} = \binom{n}{n-k+1}\frac{\lambda!}{(\lambda-k)!} = \binom{n}{n-k+1}\lambda^{(k)}$. 
\end{proof}

\begin{remark}{\rm 
$\lambda^{(k)} = \lambda(\lambda-1)(\lambda-2)\cdots(\lambda-k+1)$, the falling factorial.

It is easy to see that for a graph $G$, $b_k(G) \geq b_{k+1}(G)$. The elementary graph operation between two graphs which does not add new edges is the disjoint union of graphs. Since $\chi(G \cup H) = \max\{\chi(G),\chi(H)\}$, assume without loss of generality that $\chi(G)\leq \chi(H)$. Let $\mathcal{C} = \{c_i:1\leq i \leq \chi(G)-1\}$, $t\leq |\mathcal{C}|$ and $\mathcal{C}'=\{c_j:1\leq j \leq \chi(H)-1\}$, $k\leq |\mathcal{C}'|$ then, $\mathcal{C}\subseteq \mathcal{C}'$. Furthermore, let $t=k$ if and only if $k\leq |\mathcal{C}|$.  It follows that, $b_k(G\cup H)\leq b_t(G)+b_k(H)$.
}\end{remark}

The elementary graph operation which results in a maximum number of new edges between a copy of both graphs $G$, $H$ is the join of graphs. It is known that, $\chi(G+H) = \chi(G)+\chi(H)$. The number of times a colour $c_i$ is allocated to vertices of a graph $G$ is denoted by $\theta_G(c_i)$. This brings the next result.

\begin{theorem}\label{Thm-2.7}
	For graphs $G$ and $H$ with $\chi(G)\leq \chi(H)$, $\mathcal{C} = \{c_i:1\leq i \leq \chi(G)-1\}$, $t\leq |\mathcal{C}|$ and $\mathcal{C}'=\{c_j:1\leq j \leq \chi(G+H)-1\}$, $k\leq |\mathcal{C}'|$ and $t=k$ if and only if $k\leq |\mathcal{C}|$, it follows that $b_k(G+H) \leq b_t(G) + b_k(H) + \sum\limits_{i=1}^{k}\theta_G(c_i)\theta_H(c_i)$.
\end{theorem}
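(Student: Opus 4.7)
The plan is to construct an explicit vertex colouring of $G+H$ by gluing together optimal $\delta$-colourings of the two factors, and then to tally bad edges on the three natural pieces of $E(G+H)$. First I would fix a $\delta^{(t)}$-colouring $f$ of $G$ drawn from $\mathcal{C}$ and realising $b_t(G)$ bad edges, together with a $\delta^{(k)}$-colouring $g$ of $H$ drawn from $\mathcal{C}'$ and realising $b_k(H)$ bad edges. The hypothesis that $\mathcal{C}\subseteq \mathcal{C}'$ (with $t=k$ exactly when $k\leq |\mathcal{C}|$) guarantees that $f$ and $g$ draw from a common palette, so the map $h$ on $V(G+H)$ given by $h|_{V(G)}=f$ and $h|_{V(H)}=g$ is a well-defined vertex colouring of $G+H$ using at most $k$ colours.

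Next I would exploit the fact that $E(G+H)$ decomposes as $E(G)\cup E(H)\cup J$, where $J=\{uv : u\in V(G),\ v\in V(H)\}$ is the set of join edges. Under $h$, the $G$-edges contribute exactly $b_t(G)$ bad edges and the $H$-edges contribute exactly $b_k(H)$. For a join edge $uv$ with $u\in V(G)$ and $v\in V(H)$, badness is equivalent to $f(u)=g(v)$, so for each colour $c_i$ there are precisely $\theta_G(c_i)\cdot\theta_H(c_i)$ bad join edges in that colour class. Summing over $i=1,\ldots,k$ produces $\sum_{i=1}^{k}\theta_G(c_i)\theta_H(c_i)$ bad join edges, and adding the three contributions reproduces the claimed right-hand side.

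The step I expect to be the main obstacle is verifying that the glued colouring $h$ satisfies the $\delta^{(k)}$-rule that at most one colour class hosts internal adjacencies; a priori the bad class of $f$, the bad class of $g$, and every colour shared between $f$ and $g$ (which inherits at least one bad join edge) can all be distinct classes with adjacency in $G+H$. I would remove this threat by relabelling the palettes of $f$ and $g$ so that both of their bad classes coincide with one designated colour $c^{\ast}$ and so that $c^{\ast}$ is the only colour shared by the two colourings; the inequalities $t\leq|\mathcal{C}|$ and $k\leq|\mathcal{C}'|$ leave enough spare colours in $\mathcal{C}'$ to make this rearrangement feasible, with the same $\theta_G(c_i)$ and $\theta_H(c_i)$ multiplicities merely reassigned to new indices. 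Once $h$ is confirmed to be a bona fide $\delta^{(k)}$-colouring of $G+H$, the three-part count immediately yields $b_k(G+H)\leq b_t(G)+b_k(H)+\sum_{i=1}^{k}\theta_G(c_i)\theta_H(c_i)$, completing the argument.
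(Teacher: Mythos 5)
Your core construction --- gluing a $\delta^{(t)}$-colouring $f$ of $G$ to a $\delta^{(k)}$-colouring $g$ of $H$ and counting bad edges separately on $E(G)$, $E(H)$ and the join edges, with $\theta_G(c_i)\theta_H(c_i)$ bad join edges per shared colour --- is exactly the argument the paper gives, and you are right that the one delicate point is whether the glued colouring is a legitimate $\delta^{(k)}$-colouring of $G+H$ (the paper silently skips this). The problem is that your repair of that point does not work. If you relabel so that the palette of $f$ and the palette of $g$ meet only in the designated colour $c^{\ast}$, the combined colouring of $G+H$ uses $t+k-1$ distinct colours, which exceeds $k$ whenever $t\geq 2$; such a colouring is a candidate for $b_{t+k-1}(G+H)$, not for $b_k(G+H)$, and so cannot certify the stated inequality. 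The availability of spare colours in $\mathcal{C}'$ is beside the point: $\mathcal{C}'$ only bounds the admissible values of $k$, whereas a $\delta^{(k)}$-colouring of $G+H$ must itself consist of $k$ colours. The relabelling also collapses $\sum_{i=1}^{k}\theta_G(c_i)\theta_H(c_i)$ to the single term $\theta_G(c^{\ast})\theta_H(c^{\ast})$, so you would in any case be bounding by a quantity different from the one in the statement (harmless for a $\leq$ claim, but a sign that the construction has drifted from the theorem).

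If instead you keep the palettes nested as the theorem intends, with the $t$ colours of $G$ among the $k$ colours of $H$, the colour count is fine but the one-bad-class rule genuinely can fail: the bad class of $f$, the bad class of $g$, and every colour $c_i$ with $\theta_G(c_i)\theta_H(c_i)>0$ all host adjacencies in $G+H$. The paper's own proof does not resolve this either --- it simply tallies the three contributions and asserts the inequality --- so your instinct about where the weakness lies is sound. But as written your proposal replaces an unaddressed gap with a step that is actually false; closing it would require either reading $b_k$ as a minimum over all $k$-colourings (dropping the one-class restriction, which is what the paper's counting implicitly does) or a genuinely different construction.
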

\begin{proof}
	From the definitions of $\mathcal{C}$ and $\mathcal{C}'$, the terms $b_t(G)$ and $b_k(H)$ follow immediately. For $\theta_G(c_i)$, $\theta_H(c_i)$ the definition of the join between $G$ and $H$ implies that exactly $\theta_G(c_i)\theta_H(c_i)$ bad edges $vu$, $v\in V(G)$, $u\in V(H)$ exist in $G+H$. Since the minimum number of bad edges of $G$ and $H$ may result from different number of times each colour is allocated the products found in the sum term are not necessarily constants. Therefore, the $\leq$-inequality follows in terms of the definition of a $\delta^{(k)}$-colouring.
\end{proof}

Note that in Theorem \ref{Thm-2.7}, it is implied that $t=|\mathcal{C}|$ if and only if $k\geq |\mathcal{C}|$. If the colours to be used to colour $V(G)$ in $G+H$ are extended to set $\mathcal{C}'$, it follows that $b_k(G+H) \leq b_{k}(G) + b_k(H) +\underbrace{ \min(\sum\limits_{i=1}^{k}\theta_G(c_i)\theta_H(c_i)).}_{(over~all~ \delta^{(k)}-colourings~of~G~and~H~respectively.)}$ This inequality is illustrated with $K_3+K_3$.

First for equality and then for the relaxed colour set $\mathcal{C}' = \{c_1,c_2,c_3,c_4,c_5\}$. Let the vertices be $v_1,v_2,v_3$ and $u_1,u_2,u_3$ respectively. For the $\delta^{(2)}$-colouring $c(v_1)=c_1$, $c(v_2)=c_1$, $c(v_3)=c_2$ and $c(u_1)=c_2$, $c(u_2)=c_1$, $c(u_3)=c_2$ we have, $b_2(K_3+K_3)=1+1+(2\times 1)+(1\times 2) =6$. For $c(v_1)=c_2$, $c(v_2)=c_2$, $c(v_3)=c_1$ and $c(u_1)=c_2$, $c(u_2)=c_1$, $c(u_3)=c_2$ we have, $b_2(K_3+K_3)=1+1+(1\times 1)+(2\times 2) =7$. The minimum number of bad edges holds.

For the colouring $c(v_1)=c_1$, $c(v_2)=c_2$, $c(v_3)=c_3$ and $c(u_1)=c_4$, $c(u_2)=c_5$, $c(u_3)=c_5$ we have, $b_5(K_3+K_3)= 0+1+ 0 < 1+1+0$.

For graphs $G$ and $H$ it is known that for the corona operation:

\begin{equation*} 
\chi(G\circ H) =
\begin{cases}
\chi(G)+1, & \mbox{if $\chi(G)=\chi(H)$},\\
\chi(G), & \mbox{if $\chi(G)> \chi(H)$},\\
\chi(H)+1, & \mbox{if $\chi(G)< \chi(H)$}.
\end{cases}
\end{equation*} 

From the above different ranges for $1\leq k < \chi(G\circ H)$ are possible.  For some $\delta^{(k)}$-colouring of graph $H$ there will always exists a colour class in respect of say, $c_i$ such that $\theta^*_H(c_i)$ is an absolute minimum over all $\delta^{(k)}$-colourings of $H$ over all, $\theta_H(c_j)$. Utilising the result of Theorem \ref{Thm-2.7}, the result for the corona operation can be derived.

\begin{theorem}\label{Thm-2.8}
For graphs $G$ of order $n$, $V(G)= \{v_i:1\leq i \leq n\}$ and $n$ copies $H_i$, $1\leq i \leq n$ and $\mathcal{C} = \{c_i:1\leq i \leq \chi(G)-1\}$, $t\leq |\mathcal{C}|$ and $\mathcal{C}'=\{c_j:1\leq j \leq \chi(G\circ H)-1\}$, $k\leq |\mathcal{C}'|$ and $t=k$ if and only if $k\leq |\mathcal{C}|$, it follows that $b_k(G\circ H) = b_t(G) + b_k(H) + \sum\limits_{i=1}^{n}\theta^*_H(c_j)_{c(v_i)=c_j}$.
\end{theorem}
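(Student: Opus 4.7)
The plan is to exploit the structural decomposition of $G \circ H$ into the base graph $G$, the $n$ vertex-disjoint copies $H_i$, and the $n$ local joins $v_i + H_i$, and to aggregate the contributions class by class, in the spirit of Theorem~\ref{Thm-2.7}.

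First I would partition $E(G \circ H)$ into three pairwise disjoint classes: the edges of $G$, the edges within each $H_i$, and the corona edges $\{v_i u : u \in V(H_i)\}$ for $1 \leq i \leq n$. Restricting any $\delta^{(k)}$-colouring of $G \circ H$ to $V(G)$ produces a colouring that uses at most $t$ colours, by the hypothesis tying $t$ to $k$, so this class contributes at least $b_t(G)$ bad edges. Restricting to each $H_i$ gives a $k$-colouring of $H$ and contributes at least $b_k(H)$ bad edges per copy. For the third class, the corona edges at $v_i$ form a $K_1 + H_i$ join and, by the argument used in the proof of Theorem~\ref{Thm-2.7}, contribute exactly $\theta_{H_i}(c(v_i))$ bad edges, which is bounded below by $\theta_H^*(c(v_i))$. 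Summing yields the lower bound in the statement.

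For the matching upper bound I would build an explicit colouring realising all three minima simultaneously: fix an optimal $\delta^{(t)}$-colouring of $G$, and then for each $i$ independently take a $\delta^{(k)}$-colouring of $H_i$ that attains $\theta_H^*(c(v_i))$. Edge-disjointness of the three classes makes the bad-edge counts additive, matching the claimed value.

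The main obstacle will be verifying that the combined colouring still obeys the $\delta^{(k)}$-rule, namely that all its bad edges are confined to a single global colour class. This requires aligning the bad colour class produced by the $\delta^{(t)}$-colouring of $G$ with that of every $\delta^{(k)}$-colouring of $H_i$, and arranging for $c(v_i)$ to equal this common class whenever $\theta_H^*(c(v_i)) > 0$. The freedom to permute colours within each $\delta^{(k)}$-colouring of $H_i$, together with the correspondence between $\mathcal{C}$ and $\mathcal{C}'$ built into the hypothesis ($t = k$ iff $k \leq |\mathcal{C}|$), is precisely what makes this alignment feasible and closes the argument.
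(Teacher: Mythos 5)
Your overall route --- decompose $E(G\circ H)$ into the edges of $G$, the edges inside the copies $H_i$, and the corona edges of the local joins $K_1+H_i$, then invoke the reasoning of Theorem~\ref{Thm-2.7} on each piece --- is exactly what the paper intends; its own proof consists of the single sentence that the result is a direct consequence of Theorem~\ref{Thm-2.7}, so you have supplied considerably more detail than the source does. However, your attempt contains a genuine gap at precisely the point you flag as ``the main obstacle,'' and that obstacle is not in fact surmountable in the generality you claim. Under the $\delta^{(k)}$-rule only one colour class may have adjacency among its vertices, so \emph{all} bad edges of the combined colouring must carry one common colour. The corona edges at $v_i$ that go bad carry the colour $c(v_i)$, and the corona edges at $v_j$ that go bad carry $c(v_j)$; whenever $v_i v_j \in E(G)$ is a good edge these two colours are distinct. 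Hence if $\theta^*_H(c(v_i))>0$ and $\theta^*_H(c(v_j))>0$ for two adjacent, differently coloured base vertices --- which happens as soon as every $\delta^{(k)}$-colouring of $H$ must use each colour at least once --- the constructed colouring has bad edges in two colour classes and is not a legal $\delta^{(k)}$-colouring at all. No amount of permuting colours within the copies $H_i$ repairs this, because the offending colour is dictated by $c(v_i)$, not by the internal colouring of $H_i$. Your closing assertion that the hypothesis ``$t=k$ iff $k\leq|\mathcal{C}|$'' makes the alignment feasible is therefore unjustified; that hypothesis only governs how many colours are available on $V(G)$ and says nothing about confining bad edges to one class.

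Two further points deserve attention. First, your lower bound sums a contribution of ``at least $b_k(H)$ bad edges per copy,'' which yields $n\cdot b_k(H)$, whereas the stated formula contains a single term $b_k(H)$; you cannot have both, and your write-up silently passes from one to the other (the discrepancy arguably originates in the theorem statement itself, but a proof must confront it rather than inherit it). Second, your claim that restricting an optimal $\delta^{(k)}$-colouring of $G\circ H$ to $V(G)$ ``uses at most $t$ colours'' does not follow from anything: when $k>|\mathcal{C}|$ an optimal colouring is free to use more than $t$ colours on $V(G)$, possibly colouring $G$ properly and contributing fewer than $b_t(G)$ bad edges, so the lower bound $b_t(G)$ for that edge class is unproven. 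The paper itself tacitly treats the restriction of $V(G)$ to the colour set $\mathcal{C}$ as part of the setup (see the remark following Theorem~\ref{Thm-2.7}); if you adopt that reading you should state it as an assumption rather than derive it.
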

\begin{proof}
The result is a direct consequence of Theorem \ref{Thm-2.7}.
\end{proof}

For the colouring of $V(G)$ the relaxed colour set $\mathcal{C}'$ can be used to obtain, $b_k(G\circ H) = b_k(G) + b_k(H) + \sum\limits_{i=1}^{n}\theta^*_H(c_j)_{c(v_i)=c_j}$.

\section{Conclusion}

In \cite{3}, the $k$-defect polynomial of a graph $G$ is defined as the polynomial which determines the number of $\lambda$-colourings which result in $k$ bad edges. Clearly, the notion of $\delta^{(k)}$-colouring is a derivative of $k$-defect colouring. The critical difference is that for a $\delta^{(k)}$-colouring the number of colours, denoted by $k$, is prescribed and the condition of a minimum number of bad edges resulting from the $k$-colouring is set. In the $k$-defect polynomial, the number of colours denoted by $\lambda$ and the number of bad edges, denoted by $k$, are prescribed. The notational interchange is easy to do. Theorem \ref{Thm-2.5} and Theorem \ref{Thm-2.6} provide the first such relations because the number of bad edges is implied by the fact that the induced subgraph of any subset of vertices of a complete graph is itself, complete. Studying the relation between $\delta^{(k)}$-colouring and $k$-defect polynomials for graphs in general remains open. Graphs with large $\delta(G)$ motivates the introduction and the study of the notion of the $\delta^{(k)}$-polynomial of a graph.

It can easily be observed that if we permit more than one colour class to have adjacency, then the number of bad edges can be reduced. This observation applies well to cliques in graphs. In fact, if the upper bound is relaxed to be $1\leq k \leq \chi(G)$, then for $k=\chi(G)$, it is implied that $b_k(G) =0$. It is suggested that this derivative $\delta^{(k)}$-colouring is a worthy research avenue.

Finding an algorithm to determine the minimum term in the result of Theorem \ref{Thm-2.7} remains open as well. Determining $\theta^*_H(c_i)$ required in Theorem \ref{Thm-2.8} remains open as well.


\begin{thebibliography}{99}
\bibitem{1} J.A. Bondy and  U.S.R. Murty, (1976). \textit{Graph theory}, Springer, New York.

\bibitem{2} F. Harary, (2001). \textit{Graph theory}, Narosa Publ. House, New Delhi. 

\bibitem{3} E. Mphako-Banda, An introduction to the $k$-defect polynomials, \textit{Quaestiones Mathematicae}, (2018), 1-10.

\bibitem{4} D.B. West, (2001). \textit{Introduction to graph theory}, Prentice-Hall of India, New Delhi.

\end{thebibliography}
\end{document}